\documentclass[a4paper]{amsart}
\usepackage{amssymb}

\input xy
\xyoption{all}

\newcommand{\Id}{\mathrm{Id}}
\newcommand{\Comp}{\mathsf{Comp}}

\newcommand{\Cl}{\mathrm{Cl}}

\newcommand{\R}{\mathbb R}

\newcommand{\F}{\mathcal F}
\newcommand{\C}{\mathcal C}
\newcommand{\M}{\mathbb M}
\newcommand{\E}{\mathbb E}

\newtheorem{theorem}{Theorem}
\newtheorem{df}{Definition}
\newtheorem{lemma}{Lemma}

\newtheorem{problem}{Problem}
\newtheorem{proposition}{Proposition}

\begin{document}

\title{On exact  capacities}


\author{Taras Radul}

\maketitle

Institute of Mathematics, Casimirus the Great University of Bydgoszcz, Poland;
\newline
Department of Mechanics and Mathematics, Ivan Franko National University of Lviv,
Universytettska st., 1. 79000 Lviv, Ukraine.
\newline
e-mail: tarasradul@yahoo.co.uk

\textbf{Key words and phrases:}  Exact capacity, strongly exact capacity, core of a capacity, open map
\subjclass[MSC 2020]{28E10,91A10,52A01,54H25}

\begin{abstract} We consider  capacity (fuzzy measure, non-additive probability) on a compactum as a monotone  cooperative normed game. We introduce topological analogue of well known class of exact  games and show that this class forms a subfunctor of the capacity functor which lie between known subfunctors of convex capacities and balanced capacities. It is natural to consider  probability measures as elements of core of such games. We describe exact capacities as envelopes of the convex closed sets of  probability measures.  We also consider strongly exact capacities and pose the problem of coincidence of these two classes.
\end{abstract}

\maketitle

\section{Introduction}
Capacities (non-additive measures, fuzzy measures) were introduced by Choquet in \cite{Ch} as a natural generalization of additive measures. They found numerous applications (see for example \cite{EK},\cite{Gil},\cite{Sch}). Capacities on compacta were considered in \cite{Lin} where the important role plays the upper-semicontinuity property which  connects the capacity theory with the topological structure. Categorical and topological properties of spaces of upper-semicontinuous normed capacities on compact Hausdorff spaces  were investigated in \cite{NZ}. In particular, there was built the capacity functor which is a functorial part of a capacity monad $\M$.

Let us remark that each monad structure describes some algebraic properties of a functor and leads to some abstract convexity (see for example \cite{R1}, \cite{R2} and \cite{R8}). Convexity structures are widely used to prove existence of fixed points and equilibria (see for example \cite{E}, \cite{BCh}, \cite{KZ}, \cite{Rad}, \cite{R3}, \cite{R4}, \cite{R6}, \cite{R7} ). The class of all capacities contains two important subclasses of possibility and necessity capacities which form submonads of the capacity monad and, hence, these subspaces inherit corresponding convexity structures which was used in   \cite{Rad}, \cite{R3}, \cite{R4} in order to prove existence of equilibria. However another important subclass of convex capacities does not form a submonad of the capacity monad \cite{NZ}.

Capacities can be considered as monotone cooperative games.  Important subclasses of cooperative games are convex, exact and balanced games (see for instance \cite{Grab} for more details). Convex capacities on compacta were considered in \cite{NZ}, balanced capacities on compacta were introduced in  \cite{R5}, both construction form subfunctors of the capacity functor.

We introduce exact  capacities on compacta in this paper and show that these notions lead to some subfunctor of the capacity functor.

\section{Exact and  totally balanced capacities} By compactum we mean a compact Hausdorff space.  In what follows, all spaces are assumed to be compacta except for $\R$ and maps are assumed to be continuous.  By $\F(X)$ we denote the family of all closed subsets of $X$.

For each compactum $X$ we denote by $C(X)$ the Banach space of all
continuous functions $\phi:X\to\R$ with the usual $\sup$-norm: $
\|\phi\| =\sup\{|\phi(x)|\mid x\in X\}$. We also consider on $C(X)$ the natural partial order. For $\lambda\in\R$ we denote by $\lambda_X$ the constant function on $X$ with values $\lambda$. For a subset $D\subset \R$ we denote by $C(X,D)$ the subset of $C(X)$ consisting of functions with the codomain $D$.

We need the definition of capacity on a compactum $X$. We follow a terminology of \cite{NZ}.
A function $\nu:\F(X)\to [0,1]$  is called an {\it upper-semicontinuous capacity} on $X$ if the three following properties hold for each closed subsets $F$ and $G$ of $X$:
\begin{enumerate}
\item $\nu(X)=1$, $\nu(\emptyset)=0$;
\item if $F\subset G$, then $\nu(F)\le \nu(G)$;
\item if $\nu(F)<a$, then there exists an open set $O\supset F$ such that $\nu(B)<a$ for each compactum $B\subset O$.
\end{enumerate}

If $F$ is a one-point set we use a simpler notation $\nu(a)$ instead $\nu(\{a\})$.
A capacity $\nu$ is extended in \cite{NZ} to all open subsets $U\subset X$ by the formula $\nu(U)=\sup\{\nu(K)\mid K$ is a closed subset of $X$ such that $K\subset U\}$.

It was proved in \cite{NZ} that the space $MX$ of all upper-semicontinuous  capacities on a compactum $X$ is a compactum as well, if a topology on $MX$ is defined by a subbase that consists of all sets of the form $O_-(F,a)=\{c\in MX\mid c(F)<a\}$, where $F$ is a closed subset of $X$, $a\in [0,1]$, and $O_+(U,a)=\{c\in MX\mid c(U)>a\}$, where $U$ is an open subset of $X$, $a\in [0,1]$. Since all capacities we consider here are upper-semicontinuous, in the following we call elements of $MX$ simply capacities.

The notion of convex capacity (under another name) appeared in \cite{Ch}.  A capacity $\nu\in MX$ is called convex if for all closed subsets $A$, $B\subset X$ of a compactum $X$ we have $\nu(A\cup B) + \nu(A \cap B) \ge \nu(A) + \nu(B)$. We denote by $MCX$ the subspace of the space $MX$ consisting of convex capacities.  It was shown in \cite{NZ} that $MCX$ is closed in $MX$.

A capacity $\nu\in MX$ is called balanced if we have $\sum_{i=1}^n\lambda_i\nu(A_i)\le 1$ for each numbers $\lambda_1,\dots,\lambda_n\in\R$ and closed subsets $A_1,\dots,A_n$ of $X$ such that $\sum_{i=1}^n\lambda_i\chi_{A_i}\le 1_X$ where $\chi_{A_i}$ is the characteristic function of the set $A_i$ \cite{R5}. We denote by $MBX$ the subspace of the space $MX$ consisting of balanced capacities.  It was shown in \cite{R5} that $MBX$ is closed in $MX$.

The main aim of this section is to introduce topological analogues of important class of  exact capacities (or more generally games) well known in the frames of finite $X$.

By probability measure on a compactum $X$ we mean normed $\sigma$-additive regular Borel measure. We denote by $PX$ the set of all probability measure on a compactum $X$. Evidently we have $PX\subset MX$, we consider the subspace topology on $PX$.


For a compactum $X$ by $\exp X$ we denote the set of non-void
compact subsets of $X$ provided with the Vietoris topology. A subbase
of this topology consists of the sets of the form
$$<U>=\{A\in \exp\ X\mid A\subset U\}$$
and $$<X,U>=\{A\in \exp\ X\mid \ A\cap U\ne\emptyset$$  where $U$ is  open in $X$. The space $\exp\ X$ is called the
hyperspace of $X$.

For any convex compactum $X$, $ccX$ is defined to be the set of all nonempty closed convex
subsets of $X$, the space $ccX$ is considered as the subspace of the hyperspace $expX$. For any affine mapping $f : X\to Y$
the map $ccf:ccX\to ccY$  is given by $ccf(A) = f(A)$ where $A \in ccX$. The functor $cc$ was studied
in \cite{BZR}

Consider the map $sX:cc(PX)\to MX$ defined as follows $sX(\alpha)(A)=\inf\{\mu(A)\mid\mu\in \alpha\}$ for $\alpha\in cc(PX)$ and $A\in\F(X)$.

\begin{proposition}\label{scont} The map $sX$ is well-defined and continuous.
\end{proposition}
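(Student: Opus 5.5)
The plan is to verify the three defining properties of an upper-semicontinuous capacity for $sX(\alpha)$, and then to check continuity on the subbase of $MX$. Two facts will be used throughout. First, $PX$ carries the subspace topology from $MX$. Second, for a probability measure $\mu$, the extension of $\mu$ to open sets given in \cite{NZ} (supremum over closed subsets) agrees with the usual measure of the open set, by inner regularity. So $O_-(F,a)\cap PX$ and $O_+(U,a)\cap PX$ are open subsets of $PX$ described by ordinary measure values.

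\emph{Well-definedness.} Fix $\alpha\in cc(PX)$ and write $\nu=sX(\alpha)$. Properties (1) and (2) are immediate, since each $\mu\in\alpha$ satisfies $\mu(X)=1$ and $\mu(\emptyset)=0$, and taking the infimum preserves monotonicity. For (3), suppose $\nu(F)<a$.
\begin{enumerate}
\item Pick $\mu_0\in\alpha$ with $\mu_0(F)<a$.
\item By outer regularity of $\mu_0$, choose an open $O\supset F$ with $\mu_0(O)<a$.
\item Then for every closed $B\subset O$ we get $\nu(B)\le\mu_0(B)\le\mu_0(O)<a$.
\end{enumerate}
So $\nu\in MX$. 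Neither convexity nor closedness of $\alpha$ is needed here.

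\emph{Continuity, sets of type $O_-$.} Let $F$ be closed and $a\in[0,1]$. Then $sX(\alpha)(F)<a$ holds iff some $\mu\in\alpha$ has $\mu(F)<a$. Equivalently, $\alpha$ meets the open set $W=O_-(F,a)\cap PX$ of $PX$. Hence $(sX)^{-1}(O_-(F,a))=\langle PX,W\rangle\cap cc(PX)$, which is open in the Vietoris topology.

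\emph{Continuity, sets of type $O_+$.} This is the step needing care, because $\mu\mapsto\mu(K)$ is only upper semicontinuous for closed $K$. Let $U$ be open and suppose $sX(\alpha)(U)>a$.
\begin{enumerate}
\item By the definition of the extension to open sets, there are a closed $K\subset U$ and a number $a'>a$ with $\inf_{\mu\in\alpha}\mu(K)>a'$.
\item By normality, choose an open $V$ with $K\subset V\subset \Cl V\subset U$.
\item Every $\mu\in\alpha$ satisfies $\mu(V)\ge\mu(K)>a'$. Hence $\alpha\subset G:=O_+(V,a')\cap PX$, which is open in $PX$, and $\langle G\rangle$ is a Vietoris neighbourhood of $\alpha$.
\item Take any $\beta\in\langle G\rangle\cap cc(PX)$. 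Each $\mu\in\beta$ has $\mu(\Cl V)\ge\mu(V)>a'$, so $sX(\beta)(\Cl V)\ge a'$.
\item Since $\Cl V$ is a closed subset of $U$, this gives $sX(\beta)(U)\ge a'>a$.
\end{enumerate}
Thus $\langle G\rangle\cap cc(PX)\subset (sX)^{-1}(O_+(U,a))$, so this preimage is open. Since preimages of all subbasic open sets are open, $sX$ is continuous.
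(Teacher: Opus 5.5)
Your proof is correct, and its skeleton matches the paper's. You check preimages of subbasic sets, and in the $O_-$ case you use $\langle PX, O_-(F,a)\rangle$ exactly as the paper does.

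The real difference is in the $O_+$ case. The paper takes $b=\inf_{\mu\in\alpha}\mu(U)$ and $c=\frac{a+b}{2}$, uses the neighbourhood $\langle O_+(U,c)\rangle$, and asserts $sX(\langle O_+(U,c)\rangle)\subset O_+(U,a)$ without comment. That inclusion is not formal. For $\beta$ in this neighbourhood, knowing $\mu(U)>c$ for every $\mu\in\beta$ only bounds $\inf_{\mu\in\beta}\sup_{K}\mu(K)$. However, $sX(\beta)(U)=\sup\{\inf_{\mu\in\beta}\mu(K)\mid K\subset U\ \text{closed}\}$, so the order of $\sup$ and $\inf$ has to be swapped. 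Doing so needs a finite-subcover argument over the compact set $\beta$:
\begin{enumerate}
\item for each $\mu\in\beta$, choose an open $V_\mu$ with $\Cl V_\mu\subset U$ and $\mu(V_\mu)>c$;
\item cover $\beta$ by finitely many of the open sets $\{\mu'\mid\mu'(V_\mu)>c\}$;
\item take the union of the corresponding closures $\Cl V_\mu$ as a single closed witness inside $U$.
\end{enumerate}

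Your construction avoids this. You pick a closed witness $K\subset U$ for $\alpha$ itself and a buffer $K\subset V\subset\Cl V\subset U$, which builds a uniform witness $\Cl V$ into the neighbourhood. The required inclusion then becomes a one-line estimate, and it does not even use that $\beta$ is closed. You also give the outer-regularity argument for property (3), which the paper dismisses as easy to check.

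In short, your route is more explicit and self-contained. The paper's neighbourhood is slightly simpler to state, but it relies on an unstated compactness step that uses the closedness of elements of $cc(PX)$.
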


\begin{proof} It is easy to check that $sX$ is well-defined, i.e. $sX(cc(PX))\subset MX$. Consider any $\alpha\in cc(PX)$ such that $sX(\alpha)\in O_+(U,a)$ for some $a\in[0,1]$ and open $u\subset X$. Then we have $\inf\{\mu(A)\mid\mu\in \alpha\}=b>a$. Put $c=\frac{a+b}{2}$. We have $\alpha\in<O_+(U,c)>$ and $$sX(<O_+(U,c)>)\subset O_+(U,a).$$

Now, consider any $\alpha\in cc(PX)$ such that $sX(\alpha)\in O_-(A,a)$ for some $a\in[0,1]$ and closed $A\subset X$. Then we have  $\alpha\in<PX,O_-(A,a)>$ and $$sX(<PX,O_-(A,a)>)\subset O_-(A,a).$$
\end{proof}

The following definition is a topological analogue for capacities of the definition of exact game given in \cite{Sch3}.

\begin{df}\label{exact} A capacity $\nu\in MX$ is called exact if  $\nu\in sX(cc(PX))$.
\end{df}

We denote by $MEX$ the subspace of the space $MX$ consisting of exact capacities.

\begin{proposition}\label{closedE} The set $MEX$ is closed and convex in $MX$.
\end{proposition}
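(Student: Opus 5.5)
Closedness follows from Proposition~\ref{scont}. The space $PX$ is closed in $MX$, hence a compactum, and it is a convex subset of the linear space of signed measures. The subspace $cc(PX)\subset\exp(PX)$ is closed in the Vietoris topology: a Vietoris limit of convex closed sets is convex, because the affine operations on $PX$ are continuous in the weak* topology, and this topology agrees on $PX$ with the one inherited from $MX$. So $cc(PX)$ is compact. Its image $MEX=sX(cc(PX))$ under the continuous map $sX$ into the Hausdorff space $MX$ is then compact, and therefore closed. The one point that genuinely needs checking is that the topology on $PX$ coming from the subbase $O_\pm$ is the weak* topology, so that convex combinations and Vietoris limits behave as expected. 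I would cite \cite{NZ} for this, where $PX$ is identified as a subspace of $MX$.

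For convexity, the convex structure on $MX$ is pointwise: $(t\nu_1+(1-t)\nu_2)(A)=t\nu_1(A)+(1-t)\nu_2(A)$. Take $\nu_i=sX(\alpha_i)$ with $\alpha_i\in cc(PX)$ and $t\in[0,1]$, and set
$$\alpha=t\alpha_1+(1-t)\alpha_2=\{t\mu_1+(1-t)\mu_2\mid \mu_i\in\alpha_i\}.$$
This set is convex. It is also compact, since it is the image of the compact set $\alpha_1\times\alpha_2$ under the continuous map $(\mu_1,\mu_2)\mapsto t\mu_1+(1-t)\mu_2$, so $\alpha\in cc(PX)$.

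It remains to show that $sX(\alpha)=t\nu_1+(1-t)\nu_2$. Fix a closed set $A$. Then
$$\inf\{t\mu_1(A)+(1-t)\mu_2(A)\mid \mu_i\in\alpha_i\}=t\inf_{\mu_1\in\alpha_1}\mu_1(A)+(1-t)\inf_{\mu_2\in\alpha_2}\mu_2(A),$$
because $\mu_1$ and $\mu_2$ range independently over $\alpha_1$ and $\alpha_2$. Hence $sX(\alpha)=t\nu_1+(1-t)\nu_2$, which lies in $MEX$.

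I expect the main obstacle to be the topological bookkeeping rather than any computation: showing that $cc(PX)$ is closed in $\exp(PX)$ and that Minkowski combinations stay in $cc(PX)$. Both reduce to the continuity of the affine operations on $PX$ in the topology inherited from $MX$, which I will take from \cite{NZ}.
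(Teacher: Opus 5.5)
Your proof is correct and follows the paper's own argument. Closedness comes from $MEX=sX(cc(PX))$ being a continuous image under $sX$, and convexity comes from the Minkowski combination $t\alpha_1+(1-t)\alpha_2\in cc(PX)$, whose $sX$-image splits as $t\nu_1+(1-t)\nu_2$ because the infimum runs over independent variables. You also correctly supply details the paper leaves implicit: the compactness of $cc(PX)$ (via closedness in $\exp(PX)$) and the closedness of the Minkowski combination.
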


\begin{proof} Proposition \ref{scont} implies that  $MEX$ is closed. Take any $\nu,\xi\in MEX$ and $\lambda\in[0,1]$. There exist $\alpha$, $\beta\in cc(PX)$ such that $sX(\alpha)=\nu$ and $sX(\beta)=\xi$. Consider the set $$(1-\lambda)\alpha+\lambda\beta=\{(1-\lambda)\mu_1+\lambda\mu_2\mid \mu_1\in\alpha\ \mu_2\in\beta\}\in cc(PX).$$
It is easy to see that $sX((1-\lambda)\alpha+\lambda\beta)=(1-\lambda)\nu+\lambda\xi$, hence $(1-\lambda)\nu+\lambda\xi\in MEX$.
\end{proof}

\begin{df}\label{core}\cite{R5}  Let $\nu\in MX$ be a capacity. We say that a probability measure $\mu$ belongs to the core of $\nu$ if $\mu(A)\ge\nu(A)$ for each closed subset $A$ of $X$.
\end{df}

For $\nu\in MX$ we denote by $core\nu$ the set of all probability measures $\mu$ belonging to the core of $\nu$. It was proved in \cite{R5} that $core\nu\ne\emptyset$ if and only if $\nu\in MBX$.

\begin{lemma}\label{coreCl} For each $\nu\in MX$ $core\nu$ is a closed convex subset of $PX$.
\end{lemma}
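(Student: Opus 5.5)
Convexity is immediate and I will do it first. If $\mu_1,\mu_2\in core\nu$ and $\lambda\in[0,1]$, then for every closed $A\subset X$
$$((1-\lambda)\mu_1+\lambda\mu_2)(A)=(1-\lambda)\mu_1(A)+\lambda\mu_2(A)\ge(1-\lambda)\nu(A)+\lambda\nu(A)=\nu(A).$$
A convex combination of probability measures is again a probability measure, so $core\nu$ is convex. If $core\nu$ is empty, the statement is trivial in both parts.

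For closedness I will write the core as an intersection
$$core\nu=\bigcap_{A\in\F(X)}\{\mu\in PX\mid \mu(A)\ge\nu(A)\},$$
and show that each set $\{\mu\in PX\mid\mu(A)\ge\nu(A)\}$ is closed in $PX$. The topology on $PX$ is the subspace topology from $MX$, and this is the natural tool. The complement of this set in $PX$ is $\{\mu\in PX\mid \mu(A)<\nu(A)\}=PX\cap O_-(A,\nu(A))$. Since $O_-(A,a)$ is a subbasic open set of $MX$ for closed $A$, the complement is open in $PX$. So each set in the intersection is closed, and hence so is the intersection.

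The only step needing care is that the topology in play really is the subspace topology from $MX$, and not, say, the weak* topology on measures. The paper explicitly says $PX$ carries the subspace topology, so the argument above suffices. If the weak* topology were intended instead, I would use the standard fact that on $PX$ the two topologies coincide: by the portmanteau theorem, $\mu\mapsto\mu(A)$ is upper semicontinuous for closed $A$ in the weak* topology. Either way, $\{\mu\mid\mu(A)<a\}$ is open, and closedness follows exactly as above.
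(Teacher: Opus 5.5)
Your proof is correct and follows the paper's argument: the paper shows that each $\mu\notin core\nu$ has the open neighbourhood $\{\xi\in PX\mid \xi(A)<\nu(A)\}=PX\cap O_-(A,\nu(A))$ in $PX$, disjoint from $core\nu$, which is the same as your observation that the complement of the core is a union of such sets. Your explicit convexity computation spells out what the paper dismisses as easy.
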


\begin{proof} It is easy to see that $core\nu$ is convex. Consider any $\mu\notin core\nu$. Then there exists a closed subset  $A\subset X$ such that $\mu(A)<\nu(A)$. Put $O=\{\xi\in PX\mid\xi(A)<\nu(A)$. Then $O$ is an open neighborhood of $\mu$ in $PX$ which is disjoint with $core\nu$.
\end{proof}

The following definition is another topological analogue of the definition of exact game given in \cite{Sch3}.

\begin{df}\label{sexact} A capacity $\nu\in MX$ is called strongly exact if for each closed subset $B\subset X$ there exists $\mu\in core\nu$ such that $\nu(A)=\mu(A)$.
\end{df}

Let us remark that $\inf$  in the definition of the map $sX$ becomes $\min$  for any finite compactum $X$, hence the notions of exactness and strong exactness coincide. However  when $X$  is infinite the infimum not always  is  attained as we can see in the following example.

Consider the compact space $X =[0,1] $ and its closed subset  $A=\{0\}$. We define a sequence of probability measures as follows: $\mu_n=\frac{1}{n}\delta_0+\frac{n-1}{n}\delta_{\frac{1}{n}}$ ($\delta_t$ is the Dirac measure concentrated in the point $t\in[0,1]$). We have evidently $\mu_n\rightarrow\mu_1=\delta_0$.  Put $\alpha=\Cl(conv\{\mu_n|n\in\mathbb {N}\}))$. Then $\nu(A)=0$, but $\mu(A)>0$ for each $A\in\alpha$.

The coincidence problem is open in general case. We can formulate it in terms of probability measures.

\begin{problem} Let $\alpha\in cc(PX)$ and $A$ is a closed subset of $X$. If there exists $\nu\in PX$ such that $\nu(A)=\inf\{\mu(A)\mid\mu\in \alpha\}$ and $\nu(B)\ge\inf\{\mu(B)\mid\mu\in \alpha\}$ for each closed $B\subset X$.
\end{problem}

For finite $X$ we have the following chain of inclusions   $$MCX\subset MEX\subset  MBX$$ (see for example \cite{Grab}).   The inclusion $MCX\subset MBX$ was proved in \cite{R5} for all compacta. Let us remark that for each $\nu\in MSEX$ we have $\nu= sX(core\nu)$, hence $MSEX\subset MEX$ for each compactum $X$. The inclusion $MEX\subset MBX$ is trivial.

\begin{theorem} Let $X$ be a compactum and  $\nu$ is a convex capacity on $X$. Then $\nu$ is  strongly exact.
\end{theorem}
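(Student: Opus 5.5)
Given a convex capacity $\nu$ and a closed set $B\subset X$, I will build $\mu\in core\,\nu$ with $\mu(B)=\nu(B)$. The plan is the classical Shapley/Choquet greedy construction, carried out on functions instead of sets.

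First, define the Choquet integral on $C(X)$ by
$$I(\phi)=\int_0^\infty \nu(\{\phi\ge t\})\,dt+\int_{-\infty}^0(\nu(\{\phi\ge t\})-1)\,dt .$$
Because $\nu$ is convex, $I$ is superadditive, i.e. $I(\phi+\psi)\ge I(\phi)+I(\psi)$. The standard proof of this for finite $X$ passes through comonotone additivity and the Lov\'asz extension. In the compact case I would approximate $\phi$ and $\psi$ by step functions whose level sets are closed sets. Upper semicontinuity of $\nu$ (property 3) then lets $\nu(\{\phi\ge t\})$ be approximated for all but countably many $t$, so superadditivity passes to the limit. The functional $I$ is also positively homogeneous, monotone, and satisfies $I(1_X)=1$ and $I(\phi+c_X)=I(\phi)+c$. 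Hence $-I$ is a sublinear functional.

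Next, fix $B$ and choose, via upper semicontinuity, a decreasing family of continuous functions $\phi_n$ with $0\le\phi_n\le1$, $\phi_n=1$ on $B$, and $I(\phi_n)\to\nu(B)$. For example, take Urysohn functions supported in neighbourhoods $O_n$ of $B$ on which $\nu<\nu(B)+1/n$. By Hahn--Banach (equivalently Sandwich/Mazur--Orlicz), each $n$ gives a linear functional $L_n\ge I$ on $C(X)$ with $L_n(\phi_n)=I(\phi_n)$. Monotonicity of $I$ makes $L_n$ positive, and $L_n(1_X)=1$ follows from $L_n(\pm1_X)\ge\pm1$. By the Riesz theorem, $L_n$ is a probability measure $\mu_n$. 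To check $\mu_n\in core\,\nu$: for closed $A$ and continuous $\psi\ge\chi_A$, we have $\mu_n(\psi)\ge I(\psi)\ge\nu(A)$, and taking the infimum over such $\psi$ gives $\mu_n(A)\ge\nu(A)$. Also $\mu_n(B)\le\mu_n(\phi_n)=I(\phi_n)<\nu(B)+1/n$.

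Finally, avoiding the limit problem shown in the example requires care. Taking a weak$^*$ limit $\mu$ of $\mu_n$ gives $\mu\in core\,\nu$ by Lemma \ref{coreCl}, but $\mu(B)$ can jump up in the limit, exactly as in the example above. So I will instead carry out Hahn--Banach once, on a single cone. Let $\Phi$ be the set of continuous functions $\phi$ with $\phi\ge\chi_B$. Its infimum is $\chi_B$ only pointwise, so the right tool is the functional on upper semicontinuous (or bounded Borel) functions: extend $I$ as a Choquet integral to the bounded functions that are uniform limits of simple closed-level functions, keeping superadditivity. Apply Hahn--Banach on this space to obtain a finitely additive $L\ge I$ with $L(\chi_B)=I(\chi_B)=\nu(B)$. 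Then set $\mu$ equal to the regular Borel measure representing $L|_{C(X)}$. For closed $B$, regularity gives $\mu(B)=\inf\{\mu(\phi)\mid\phi\in\Phi\}\le L(\chi_B)=\nu(B)$. The core inequality holds as before, which forces $\mu(B)=\nu(B)$.

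The main obstacle is this last step: establishing superadditivity of the extended Choquet integral on the larger function space using only upper semicontinuity, and making sure the restriction to $C(X)$ does not lose the equality at $B$. I expect the sandwich argument through $\inf_{\phi\in\Phi}$ to handle the latter, since monotonicity of $L$ gives $L(\phi)\ge L(\chi_B)$.
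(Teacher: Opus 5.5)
Your first two steps are sound, modulo routine details. They already give $\nu=sX(core\,\nu)$, that is, exactness. The gap is in the last step, which is where attainment has to come from.

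Regularity does give $\mu(B)=\inf\{\int\phi\,d\mu\mid\phi\in\Phi\}=\inf\{L(\phi)\mid\phi\in\Phi\}$. But positivity of $L$ only yields $L(\phi)\ge L(\chi_B)$, i.e.\ $\mu(B)\ge\nu(B)$, which you already know from $\mu\in core\,\nu$. The inequality you actually need is $\inf_{\phi\in\Phi}L(\phi)\le L(\chi_B)$. That is continuity of $L$ from above along $\Phi\downarrow\chi_B$, and a finitely additive functional produced by Hahn--Banach has no reason to have it: mass that $L$ places ``just outside'' $B$ is moved onto $B$ by the Riesz measure of $L|_{C(X)}$.

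Concretely, take $X=[0,1]$, $B=\{0\}$, and $\nu(A)=1$ if $A=X$, $\nu(A)=0$ otherwise. This is a convex upper-semicontinuous capacity with Choquet integral $I(g)=\inf_X g$. For a free ultrafilter $\mathcal U$ on $\mathbb N$, the functional $L(g)=\lim_{\mathcal U}g(1/n)$ is linear and satisfies $L\ge I$ and $L(\chi_B)=0=\nu(B)$. So it is a legitimate output of your Hahn--Banach step. Yet $L|_{C(X)}$ is evaluation at $0$, so $\mu=\delta_0$ and $\mu(B)=1$. You have moved the limit problem of the paper's example into the finitely additive setting rather than resolved it.

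The missing idea is to make a \emph{single} measure tight on all the $\phi_n$ at once. This uses convexity a second time, through comonotone additivity (the topological form of Shapley's core element agreeing with $\nu$ along a chain).

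\begin{enumerate}
\item By normality, choose the $\phi_n$ nested: $\phi_n=1$ on a neighbourhood of $B$, $\{\phi_n>0\}\subset O_n$, and $\{\phi_{n+1}>0\}\subset\{\phi_n=1\}$.
\item Put $f=\sum_n2^{-n}\phi_n\in C(X)$ and $s_k=\sum_{n\le k}2^{-n}$. For $t\in(s_{k-1},s_k]$ one checks $\{f\ge t\}=\{\phi_k\ge 2^k(t-s_{k-1})\}$, hence $I(f)=\sum_n2^{-n}I(\phi_n)$.
\item Apply Hahn--Banach once, in $C(X)$, at $f$: get $L\ge I$ with $L(f)=I(f)$. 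Since $L$ is continuous and $L(\phi_n)\ge I(\phi_n)$ for each $n$, this forces $L(\phi_n)=I(\phi_n)$ for every $n$.
\item The Riesz measure $\mu$ of $L$ lies in $core\,\nu$ by your own argument. Moreover $\mu(B)\le\int\phi_n\,d\mu=I(\phi_n)\le\nu(B)+1/n$ for all $n$, so $\mu(B)=\nu(B)$.
\end{enumerate}

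This also makes the extension of $I$ beyond $C(X)$, and its superadditivity problem, unnecessary. The paper states this theorem without proof, so there is no argument of its own to compare with.
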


\section{Categorical properties of balanced capacities}

By $\Comp$ we denote the category of compact Hausdorff
spaces (compacta) and continuous maps.
 For a continuous map of compacta $f:X\to Y$ we define the map $Mf:MX\to MY$ by the formula $Mf(\nu)(A)=\nu(f^{-1}(A))$ where $\nu\in MX$ and $A\in\F(Y)$. The map $Mf$ is continuous.  In fact, this extension of the construction $M$ defines the capacity functor $M$ in the category $\Comp$ (see \cite{NZ} for more details). It was shown in  \cite{NZ} and \cite{R5} respectively that the constructions $MC$ and $MB$ form the subfunctors of $M$. It means that $Mf(MCX)\subset MCY$ and $Mf(MBX)\subset MBY$.

\begin{lemma}\label{nat} Let $f:X\to Y$ be a continuous map between compacta $X$ and $Y$. Then we have  $Mf\circ sX=sY\circ ccPf$.
\end{lemma}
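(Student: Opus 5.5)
We need to prove $Mf\circ sX = sY\circ ccPf$: for every $\alpha\in cc(PX)$ and every closed $A\subset Y$,
$$\inf\{\mu(f^{-1}(A))\mid \mu\in\alpha\}=\inf\{\eta(A)\mid \eta\in Pf(\alpha)\}.$$
The plan is to unwind both sides and reduce the statement to the defining identity of the push-forward of measures. We then check that every object involved is well defined.

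\textbf{Step 1 (well-definedness).} First we confirm that $ccPf$ maps $cc(PX)$ into $cc(PY)$.
\begin{enumerate}
\item The map $Pf:PX\to PY$, $Pf(\mu)(B)=\mu(f^{-1}(B))$, is the restriction of $Mf$ to $PX$. It is continuous, being a restriction of the continuous map $Mf$.
\item $Pf$ is affine, since push-forward of measures is linear: $Pf((1-\lambda)\mu_1+\lambda\mu_2)=(1-\lambda)Pf(\mu_1)+\lambda Pf(\mu_2)$.
\item Hence $Pf(\alpha)$ is convex, because the image of a convex set under an affine map is convex.
\item $Pf(\alpha)$ is compact, hence closed, because $\alpha$ is compact and $Pf$ is continuous.
\end{enumerate}
So $ccPf(\alpha)=Pf(\alpha)\in cc(PY)$, and both composites $Mf\circ sX$ and $sY\circ ccPf$ map $cc(PX)$ to $MY$. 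Proposition~\ref{scont} guarantees that $sX$ and $sY$ land in $MX$ and $MY$ respectively.

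\textbf{Step 2 (the computation).} Fix $\alpha\in cc(PX)$ and a closed $A\in\F(Y)$. Then:
\begin{itemize}
\item $(Mf\circ sX)(\alpha)(A)=sX(\alpha)(f^{-1}(A))=\inf\{\mu(f^{-1}(A))\mid\mu\in\alpha\}$. This makes sense because $f^{-1}(A)$ is closed in $X$.
\item $(sY\circ ccPf)(\alpha)(A)=\inf\{\eta(A)\mid \eta\in Pf(\alpha)\}=\inf\{Pf(\mu)(A)\mid\mu\in\alpha\}$.
\end{itemize}
By definition of the push-forward, $Pf(\mu)(A)=\mu(f^{-1}(A))$. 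The two index sets over which the infima are taken are therefore literally the same set of reals, so the infima coincide. Since $A$ was an arbitrary closed subset of $Y$, the two capacities are equal, and the lemma follows.

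\textbf{Main obstacle.} The argument has no substantive difficulty. The only point needing care is Step 1: we must ensure that $Pf(\alpha)$ is closed and convex, so that $ccPf$ is defined as written. We also use that $Pf$ agrees with $Mf$ on $PX$, which holds because for a probability measure regarded as a capacity, $Mf$ acts exactly as the usual image measure.
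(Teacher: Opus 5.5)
Your proof is correct and follows the paper's argument. Both unwind the definitions and use $Pf(\mu)(A)=\mu(f^{-1}(A))$ to see that the two infima run over the same set of reals; your Step 1 check that $Pf(\alpha)\in cc(PY)$ is left implicit in the paper but is sound.
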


\begin{proof} Take any $\alpha\in ccPX$ and $A\in\F(X)$. Then we have $$Mf(sX(\alpha))(A)=sX(\alpha)(f^{-1}(A))=\inf\{\nu(f^{-1}(A))\mid\nu\in\alpha\}=$$ $$=\inf\{Pf(\nu)(A)\mid\nu\in\alpha\}=\inf\{\mu)(A)\mid\mu\in ccPf(\alpha)\}=sY(ccPf(\alpha))(A).$$ \end{proof}

Let us remark that Lemma \ref{nat} means in the language of the category theory that the family $s=\{sX\mid X\in\Comp$ is a natural transformation of the functor $ccP$ into the functor $M$.

\begin{lemma}\label{sub} Let $f:X\to Y$ be a continuous map between compacta $X$ and $Y$. Then we have  $Mf(MEX)\subset MEY$.
\end{lemma}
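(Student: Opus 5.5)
We derive the inclusion directly from the naturality statement of Lemma \ref{nat}. Let $\nu\in MEX$. By Definition \ref{exact} there is $\alpha\in cc(PX)$ with $\nu=sX(\alpha)$. Applying $Mf$ and using Lemma \ref{nat}, we get
$$Mf(\nu)=Mf(sX(\alpha))=sY(ccPf(\alpha)).$$
So it suffices to check that $ccPf(\alpha)=Pf(\alpha)$ is an element of $cc(PY)$. Then $Mf(\nu)\in sY(cc(PY))=MEY$, which is exactly the claim.

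The only thing to verify is that $ccPf$ really maps $cc(PX)$ into $cc(PY)$.

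\textbf{Probability measures are preserved.} The map $Pf:PX\to PY$ is the usual push-forward $Pf(\mu)(A)=\mu(f^{-1}(A))$. It is the restriction of $Mf$ to $PX$, and it sends probability measures to probability measures, since push-forwards of regular Borel probability measures on compacta are again regular Borel probability measures.

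\textbf{Continuity.} The map $Pf$ is continuous for the topology of $PX$ as a subspace of $MX$. This follows because $Mf$ is continuous, as noted above.

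\textbf{Affinity.} The map $Pf$ is affine: $Pf((1-\lambda)\mu_1+\lambda\mu_2)=(1-\lambda)Pf(\mu_1)+\lambda Pf(\mu_2)$. This is immediate from the formula, since preimages do not depend on the measure.

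\textbf{Conclusion.} Since $\alpha$ is compact and nonempty, $Pf(\alpha)$ is compact and nonempty, hence closed in the Hausdorff space $PY$. Since $\alpha$ is convex and $Pf$ is affine, $Pf(\alpha)$ is convex. Therefore $Pf(\alpha)\in cc(PY)$, as required.

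The main potential obstacle is a topological subtlety. For $Pf(\alpha)$ to be closed we need $Pf$ to be continuous with respect to the topology on $PX$ that the paper uses for $cc(PX)$, and the closedness of $\alpha$ must mean compactness. I would handle this by noting two facts. First, $PX$ is a closed subset of the compactum $MX$: it is the standard weak* compact set, and on probability measures the capacity topology agrees with the weak* topology. Hence closed subsets of $PX$ are compact. Second, continuity of $Pf$ is inherited from continuity of $Mf$. With these in place, the lemma follows in one line from Lemma \ref{nat}.
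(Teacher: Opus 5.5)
Your proof is correct and is the paper's argument: write the exact capacity as $sX(\alpha)$, apply the naturality identity of Lemma \ref{nat}, and note that $ccPf(\alpha)\in cc(PY)$. The only difference is that you explicitly check $Pf(\alpha)\in cc(PY)$, using compactness of $PX$ and continuity and affinity of $Pf$, whereas the paper takes this for granted.
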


\begin{proof} Take any $\nu\in Mf(MEX)$. Then there exists $\mu\in MEX$ such that $Mf(\mu)=\nu$. By the definition of exact capacity there exists $\alpha\in ccPX$ such that $sX(\alpha)=\mu$. We have $\nu=Mf(sX(\alpha))=sY(ccPf(\alpha))$ by Lemma\ref{nat}. Since $ccPf(\alpha)\in ccPY$, we  have $\nu\in MEY$.
\end{proof}

Proposition  \ref{closedE} and Lemma \ref{sub} imply that  $ME$ is a subfunctor of $M$.

We recall the notion  of monad (or triple) in the sense of S.Eilenberg and J.Moore \cite{EM}.  We define it only for the category $\Comp$.

A {\it monad} \cite{EM} $\E=(E,h,m)$ in the category $\Comp$ consists of an endofunctor $E:{\Comp}\to{\Comp}$ and natural transformations $h:\Id_{\Comp}\to F$ (unity), $m:F^2\to F$ (multiplication) satisfying the relations $$m\circ Eh=m\circ h E=\text{\bf 1}_E$$ and $$m\circ m E=m\circ Em.$$ (By $\Id_{\Comp}$ we denote the identity functor on the category ${\Comp}$ and $E^2$ is the superposition $E\circ E$ of $E$.)

The functor $M$ was completed to the monad $\M=(M,h,m)$ in \cite{NZ}, where the components of the  natural transformations are defined as follows:
$$
h X(x)(F)=\begin{cases}
1,&x\in F,\\
0,&x\notin F;\end{cases}
$$

$$m X(\C)(F)=\sup\{t\in[0,1]\mid \C(\{c\in MX\mid c(F)\ge t\})\ge t\},$$ where $x\in X$, $F$ is a closed subset of $X$ and $\C\in M^2X$.

However, it was shown  \cite{NZ} and \cite{R5} respectively that neither $MC$ nor $MB$  form a submonad of $\M$.
We will adopt the example from \cite{NZ} in order to show that $ME$   neither forms a  submonad of $\M$. Define  $\C\in P^2\{1,2,3,4\}$ by the formula $$\C=0.2\delta_{0.1\delta_1+0.1\delta_2+0.8\delta_4}+0.2\delta_{0.1\delta_1+0.1\delta_3+0.8\delta_4}+
0.2\delta_{0.1\delta_2+0.1\delta_3+0.8\delta_4}+
0.4\delta_{\delta_4}.$$

Since $core\mu=\mu$ for each $\mu\in PX$, we have  the inclusion $PX\subset MEX$, hence $P^2X\subset ME^2X$ for each compactum $X$.

\begin{proposition} We have  $m \{1,2,3,4\}(\C)\notin ME\{1,2,3,4\}$.
\end{proposition}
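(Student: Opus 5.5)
The plan is to compute the capacity $\nu=m\{1,2,3,4\}(\C)$ explicitly on the relevant subsets. Then I would show that every probability measure in $core\nu$ gives the set $\{1,2,3\}$ measure strictly larger than $\nu(\{1,2,3\})$. This suffices: if $\nu=s\{1,2,3,4\}(\alpha)$ for some $\alpha\in cc(P\{1,2,3,4\})$, then by the definition of the map $sX$ every $\mu\in\alpha$ satisfies $\mu(A)\ge\nu(A)$ for all $A$. Hence $\alpha\subset core\nu$, and so $\nu(B)=\inf\{\mu(B)\mid\mu\in\alpha\}\ge\inf\{\mu(B)\mid\mu\in core\nu\}$ for every $B$. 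Exhibiting one set $B$ with $\inf_{core\nu}\mu(B)>\nu(B)$ therefore contradicts exactness.

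Write $c_1=0.1\delta_1+0.1\delta_2+0.8\delta_4$, $c_2=0.1\delta_1+0.1\delta_3+0.8\delta_4$, $c_3=0.1\delta_2+0.1\delta_3+0.8\delta_4$ and $c_4=\delta_4$, so that $\C=0.2\delta_{c_1}+0.2\delta_{c_2}+0.2\delta_{c_3}+0.4\delta_{c_4}$. By the formula for $mX$ we have $\nu(F)=\sup\{t\mid \C(\{c\mid c(F)\ge t\})\ge t\}$. I compute this for the two-point sets and for $\{1,2,3\}$.

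For $F=\{1,2\}$ the values are $c_1(F)=0.2$, $c_2(F)=c_3(F)=0.1$ and $c_4(F)=0$. For $t\in(0.1,0.2]$ only $c_1$ qualifies, and its $\C$-mass $0.2$ is $\ge t$. For $t>0.2$ no $c_i$ qualifies. Hence $\nu(\{1,2\})=0.2$, and by symmetry $\nu(\{1,3\})=\nu(\{2,3\})=0.2$. For $F=\{1,2,3\}$ we have $c_1(F)=c_2(F)=c_3(F)=0.2$ and $c_4(F)=0$. So for $t\le 0.2$ the qualifying mass is $0.6\ge t$, while for $t>0.2$ it is $0$. Hence $\nu(\{1,2,3\})=0.2$.

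Now take any $\mu\in core\nu$. Then $\mu(\{i,j\})\ge0.2$ for each pair $\{i,j\}\subset\{1,2,3\}$. Summing the three inequalities gives $2\mu(\{1,2,3\})\ge0.6$, so $\mu(\{1,2,3\})\ge0.3>0.2=\nu(\{1,2,3\})$. By the first paragraph, $\nu\notin ME\{1,2,3,4\}$. There is no real obstacle: the only care needed is in evaluating the supremum in the formula for $mX$ correctly, in particular at the boundary value $t=0.2$. Note that no argument about nonemptiness of the core is needed. If $core\nu=\emptyset$, then no $\alpha\in cc(P\{1,2,3,4\})$ can satisfy $\alpha\subset core\nu$, since elements of $cc$ are nonempty, and the conclusion holds all the more.
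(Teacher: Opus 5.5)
Your proof is correct, and it takes a different route from the paper.

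The paper proves the stronger statement $\nu\notin MB\{1,2,3,4\}$, i.e.\ that $core\,\nu=\emptyset$. It takes the values $\nu(\{4\})=0.8$, $\nu(\{2,3,4\})=0.9$, $\nu(\{1\})=0.1$ and $\nu(\{1,2\})=\nu(\{1,3\})=0.2$ from the earlier literature rather than computing them. From these it forces $\mu(\{1\})=0.1$ and $\mu(\{2\}),\mu(\{3\})\ge0.1$ for any $\mu$ in the core, so $\mu(\{1,2,3,4\})\ge1.1$. It then concludes via the trivial inclusion $MEX\subset MBX$.

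You instead refute exactness directly. You note that $\nu=s\{1,2,3,4\}(\alpha)$ forces $\alpha\subset core\,\nu$. You then use the ``triangle'' family $\{1,2\},\{1,3\},\{2,3\}$ to get $\mu(\{1,2,3\})\ge0.3>0.2=\nu(\{1,2,3\})$. In game-theoretic language, you exhibit a non-balanced subgame on $\{1,2,3\}$, i.e.\ a failure of total balancedness, which every exact game must satisfy.

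Your version is self-contained: every value comes straight from the formula for $mX$, and your computations, including the boundary case $t=0.2$, are right. The paper's version yields the stronger non-balancedness, at the cost of quoting values.

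The two arguments are closer than they look. Add the single value $\nu(\{4\})=0.8$: for $t\le0.8$ all four $c_i$ qualify, while for $t>0.8$ only $c_4$ does, with mass $0.4<t$. Together with your bound $\mu(\{1,2,3\})\ge0.3$, this gives $\mu(\{1,2,3,4\})\ge1.1$. So the core is in fact empty, and your key inequality holds vacuously here. That is harmless, because, as you observe, your argument is valid whether or not the core is empty.
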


\begin{proof} We will show more: $m \{1,2,3,4\}(\C)\notin MB\{1,2,3,4\}$. Put $\nu=m \{1,2,3,4\}(\C)$.
Suppose the contrary, $\mu\in core\nu$.  It was shown in \cite{NZ}  that $\nu(\{4\})=0.8$, $\nu(\{2,3,4\})=0.9$, $\nu(\{1\})=0.1$, $\nu(\{1,2\})=0.2$ and $\nu(\{1,3\})=0.2$. Then we have $\mu(\{1\})\ge0.1$ and $\mu(\{2,3,4\})\ge0.9$. The additivity of $\mu$ implies that $\mu(\{1\})=0.1$. Since $\mu(\{1,2\})\ge0.2$, we have $\mu(\{2\})\ge0.1$. Analogously, $\mu(\{3\})\ge0.1$. Hence $\mu(\{1,2,3,4\})\ge 0.1+0.1+0.1+0.8=1.1$ and we obtain a contradiction.
\end{proof}

Let us remark that the previous proposition implies that $ME$ neither   forms a  submonad of $\M$.

\end{document}